\title{Motivic classes of commuting varieties via power structures}
\author{Jim Bryan and Andrew Morrison}
\date{\today}
\address{
Department of Mathematics\\
University of British Columbia \\
Room 121, 1984 Mathematics Road  \\
Vancouver, B.C., Canada V6T 1Z2  
}
\newtheorem{theorem}{Theorem}%[section]
\newenvironment{primedtheorem}[1]
  {%
   \addtocounter{theorem}{-1}%
   \begin{theorem}}
  {\end{theorem}}
\newtheorem{lemma}[theorem]{Lemma}
\theoremstyle{definition}
\newtheorem{def-theorem}[theorem]{Definition-Theorem}
\newtheorem{remark}[theorem]{Remark}
\newtheorem{defn}[theorem]{Definition}
\newcommand{\CC} {\mathbb{C}}          % complex numbers
\newcommand{\NN} {\mathbb{N}}		% natural numbers
\newcommand{\ZZ} {\mathbb{Z}}		% integers
\renewcommand{\AA} {\mathbb{A}}
\newcommand{\LL} {\mathbb{L}}
\newcommand{\FF} {\mathbb{F}}
\newcommand{\Hom}{\operatorname{Hom}}
\newcommand{\End}{\operatorname{End}}
\newcommand{\GL}{\operatorname{GL}}
\newcommand{\nvec}{\underline{\mathbf{n}}}
\newcommand{\tvec}{\underline{\mathbf{t}}}
\newcommand{\kvec}{\underline{\mathbf{k}}}
\newcommand{\bvec}{\underline{\mathbf{b}}}
\newcommand{\ivec}{\underline{\mathbf{i}}}
\newcommand{\zerovec}{\underline{\mathbf{0}}}
\newcommand{\Sym}{\operatorname{Sym}}
\newcommand{\Coh}{\operatorname{Coh}}
\newcommand{\Cr}{\sqrt[r]{\CC }\times \CC }
\begin{document}

\begin{abstract}
We prove a formula, originally due to Feit and Fine, for the class of
the commuting variety in the Grothendieck group of varieties. Our
method, which uses a power structure on the Grothendieck group of
stacks, allows us to prove several refinements and generalizations of
the Feit-Fine formula. Our main application is to motivic
Donaldson-Thomas theory.
\end{abstract}

\maketitle 

%\markboth{???}  {???}
%\renewcommand{\sectionmark}[1]{}

%\tableofcontents
%\pagebreak

\section{Introduction}

Let $K_{0} (Var_{\CC })$ be the Grothendieck group of varieties
over $\CC $, i.e. the free Abelian group generated by isomorphism
classes of varieties over $\CC $ with the relation
\[
[V] = [V-Z]+[Z]
\]
for closed subvarieties $Z\subset V$. Cartesian product induces a
ring structure on $K_{0} (Var_{\CC })$. We refer to the class $[V]$
of a variety $V$ as the \emph{motivic class} of $V$.

Let $C (n)$ be the variety of commuting $n$ by $n$ matrices:
\[
C (n) = \{\,(A,B)\in \End (n)^{2}\,\, :\,\, [A,B]=0 \}.
\]

The motivic class of $C (n)$ is given by a formula which is
essentially due to Feit and Fine \cite{Feit-Fine}\footnote{Feit and Fine were counting points in $C (n)$ over the finite field $\FF _{q}$, but their method can be made to work in the motivic setting.}:
\begin{equation}\label{eqn: Feit and Fine formula for C (n)}
[C (n)] = [\GL (n)]\, \sum _{\alpha \vdash n}\, \prod _{k=1}^{\infty
}\, \frac{[\End (b_{k} (\alpha ))]}{[\GL (b_{k} (\alpha ))]}\, [\AA
_{\CC }^{b_{k} (\alpha )}]
\end{equation}
where the sum is over partitions of $n$ with the notation that $b_{k}
(\alpha )$ is the number of parts of size $k$ in $\alpha $. The
motivic class of $C (n)$ lies in the subring given by polynomials in
the Lefschetz motive $\LL :=[\AA _{\CC }^{1}]$. This follows easily
from equation~\eqref{eqn: Feit and Fine formula for C (n)} using the
elementary formula
\[
[\GL (b)] = (\LL ^{b}-1) (\LL ^{b}-\LL )\dots (\LL ^{b}-\LL ^{b-1}).
\]

In this paper, we give a new proof of Equation~\eqref{eqn: Feit and
Fine formula for C (n)} using power structures, and we prove several
refinements and generalizations.

For example, we prove that each summand in Equation~\eqref{eqn: Feit
and Fine formula for C (n)} has a geometric interpretation. Let
$\alpha $ be a partition of $n$. We say that $A\in \End (n)$ has
\emph{Jordan type $\alpha $} if the Jordan normal form of $A$ has
$b_{k} (\alpha )$ blocks of size $k$ for all $k$. We define
\[
C (\alpha ) = \{\,(A,B)\in \End (n)^{2}\,\,:\,\,[A,B]=0,\text{ $A$ has Jordan type $\alpha $} \}.
\]

\begin{theorem}\label{thm: formula for C(alpha)}
The motivic class of $C (\alpha )$ in $K_{0} (Var_{\CC })$ is given by
\begin{equation}\label{eqn: formula for C(alpha)}
[C (\alpha )] =  [\GL (n)]\,  \prod _{k=1}^{\infty
}\, \frac{[\End (b_{k} (\alpha ))]}{[\GL (b_{k} (\alpha ))]}\, \LL 
^{b_{k} (\alpha )}
\end{equation}
\end{theorem}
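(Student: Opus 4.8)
The plan is to pass to the Grothendieck ring of stacks, where the statement becomes a transparent application of the support decomposition of $\CC[x,y]$-modules together with the power structure. Recalling that in $K_0(St_{\CC})$ the class of a quotient stack $[X/\GL(n)]$ equals $[X]/[\GL(n)]$, it suffices to show that $\mathcal{C}_{\alpha}:=[C(\alpha)/\GL(n)]$ has class $[\mathcal{C}_{\alpha}]=\prod_{k\geq1}\frac{[\End(b_k(\alpha))]}{[\GL(b_k(\alpha))]}\LL^{b_k(\alpha)}$. A point of $C(\alpha)$ is a $\CC[x,y]$-module $M$ of length $n$ on which $x=A$ has Jordan type $\alpha$, and $M$ decomposes canonically as $\bigoplus_{c\in\AA^1_{\CC}}M^{(c)}$ by generalized eigenspaces of $x$ (where $x-c$ is nilpotent on $M^{(c)}$); the Jordan type of $A$ on $M$ is then $\bigsqcup_c\beta^{(c)}$, where $\beta^{(c)}$ is the Jordan type of the nilpotent operator $(x-c)|_{M^{(c)}}$. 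Writing $\mathcal{N}_{\beta}:=[N(\beta)/\GL(|\beta|)]$ for the stack of commuting pairs whose first operator is nilpotent of type $\beta$, and noting that translation in the $x$-direction identifies the ``fibre over $c$'' with $\coprod_{\beta}\mathcal{N}_{\beta}$ independently of $c$, this decomposition builds the collection $\{\mathcal{C}_{\alpha}\}$ from $\{\mathcal{N}_{\beta}\}$ by the symmetric-power (power-structure) operation over $\AA^1_{\CC}$. In the power structure on $K_0(St_{\CC})$ it reads
\[
\sum_{\alpha}[\mathcal{C}_{\alpha}]\,\mathbf{m}^{\alpha}\;=\;\Bigl(\,\sum_{\beta}[\mathcal{N}_{\beta}]\,\mathbf{m}^{\beta}\,\Bigr)^{[\AA^1_{\CC}]},
\]
an identity in $K_0(St_{\CC})[[m_1,m_2,\dots]]$ in which both sums run over all partitions (with $[\mathcal{N}_{\emptyset}]=[\mathcal{C}_{\emptyset}]=1$) and $\mathbf{m}^{\beta}:=\prod_{k\geq1}m_k^{b_k(\beta)}$ is chosen precisely so that $\mathbf{m}^{\beta\sqcup\beta'}=\mathbf{m}^{\beta}\mathbf{m}^{\beta'}$. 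This is the support decomposition behind~\eqref{eqn: Feit and Fine formula for C (n)}, only partially resolved: we group the support points of $M$ by their $x$-coordinate rather than individually.

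Next I would compute $[\mathcal{N}_{\beta}]$. The projection $N(\beta)\to O_{\beta}$ onto the nilpotent orbit $O_{\beta}=\{N:\ N\ \text{nilpotent of type}\ \beta\}$, $(N,B)\mapsto N$, has fibre over $N$ the centralizer of $N$ in $\End(|\beta|)$, a linear subspace of dimension $z(\beta):=\dim Z(N)$, and it is a $\GL(|\beta|)$-homogeneous vector bundle; hence $[N(\beta)]=[O_{\beta}]\,\LL^{z(\beta)}$ and $[\mathcal{N}_{\beta}]=\LL^{z(\beta)}/[Z(N)]$, where $Z(N)\subset\GL(|\beta|)$ is the stabilizer. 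Here I invoke the structure of centralizers of nilpotent matrices: $Z(N)$ is connected, its reductive part is $\prod_{k}\GL(b_k(\beta))$, and its unipotent radical is an affine space of dimension $z(\beta)-\sum_k b_k(\beta)^2$. Since a connected linear algebraic group is isomorphic as a variety to the product of its unipotent radical with a Levi subgroup, $[Z(N)]=\prod_{k}[\GL(b_k(\beta))]\cdot\LL^{z(\beta)-\sum_k b_k(\beta)^2}$, and therefore
\[
[\mathcal{N}_{\beta}]\;=\;\prod_{k\geq1}\frac{\LL^{b_k(\beta)^2}}{[\GL(b_k(\beta))]}\;=\;\prod_{k\geq1}\frac{[\End(b_k(\beta))]}{[\GL(b_k(\beta))]}.
\]

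Feeding this into the generating series, the fact that $[\mathcal{N}_{\beta}]$ splits into factors indexed by part size gives $\sum_{\beta}[\mathcal{N}_{\beta}]\mathbf{m}^{\beta}=\prod_{k\geq1}E(m_k)$ with $E(m):=\sum_{b\geq0}\frac{[\End(b)]}{[\GL(b)]}m^{b}$, so by multiplicativity of the power structure $\sum_{\alpha}[\mathcal{C}_{\alpha}]\mathbf{m}^{\alpha}=\prod_{k}E(m_k)^{[\AA^1_{\CC}]}$ and $[\mathcal{C}_{\alpha}]=\prod_{k}[m^{b_k(\alpha)}]\,E(m)^{[\AA^1_{\CC}]}$. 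Everything thus reduces to the identity $E(m)^{[\AA^1_{\CC}]}=E(\LL m)$. To see it, use the elementary evaluation $[\GL(b)]=\LL^{\binom{b}{2}}\prod_{j=1}^{b}(\LL^{j}-1)$ from the introduction, which identifies $\frac{[\End(b)]}{[\GL(b)]}$ with $\prod_{j=1}^{b}(1-\LL^{-j})^{-1}$ and so turns $E$ into the $q$-series $E(m)=\prod_{i\geq0}(1-\LL^{-i}m)^{-1}$ (Euler's identity, $q=\LL^{-1}$); combined with the basic power-structure relation $(1-cm)^{-[\AA^1_{\CC}]}=(1-\LL c m)^{-1}$ — i.e.\ $[\Sym^{n}\AA^1_{\CC}]=\LL^{n}$, with the variable rescaled by $c$ — one gets $E(m)^{[\AA^1_{\CC}]}=\prod_{i\geq0}(1-\LL^{-i}m)^{-[\AA^1_{\CC}]}=\prod_{i\geq0}(1-\LL^{1-i}m)^{-1}=E(\LL m)$. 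Reading off coefficients, $[m^{b}]E(m)^{[\AA^1_{\CC}]}=\LL^{b}[\End(b)]/[\GL(b)]$, hence $[\mathcal{C}_{\alpha}]=\prod_{k}\frac{[\End(b_k(\alpha))]}{[\GL(b_k(\alpha))]}\LL^{b_k(\alpha)}$, which is~\eqref{eqn: formula for C(alpha)}. (Summing over $\alpha\vdash n$ recovers~\eqref{eqn: Feit and Fine formula for C (n)}, as it must.)

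I expect the main obstacle to be making the first step precise: checking that grouping the support decomposition by $x$-coordinate is genuinely governed by the power structure on $K_0(St_{\CC})$, that the partition bookkeeping $\mathbf{m}^{\beta}$ is compatible both with disjoint union and with the power-structure axioms in the completed multivariable ring $K_0(St_{\CC})[[m_1,m_2,\dots]]$, and that the resulting identity converges coefficientwise (each $\mathbf{m}^{\alpha}$-coefficient being a finite expression). A secondary, more routine point is the centralizer structure theory used for $[\mathcal{N}_{\beta}]$ — in particular recording that $Z(N)$ is connected, so that the variety-level splitting of $Z(N)$ into unipotent radical times Levi is available.
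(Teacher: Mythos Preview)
Your argument is correct, and it reaches the same generating-function identity as the paper, but by a genuinely different route. The paper applies the multivariable power structure over the \emph{full plane} $\CC^{2}$: it writes $\sum_{\bvec}[\Coh_{\bvec}(\CC^{2})]\tvec^{\bvec}$ as $\bigl(\sum_{\bvec}[\Coh_{\bvec}^{(0,*\neq 0)}(\CC^{2})]\tvec^{\bvec}\bigr)^{\LL^{2}/(\LL-1)}$ and then observes, via the Jordan-form trick ``numerator $=$ stabilizer $=$ denominator'', that $[\Coh_{\bvec}^{(0,*\neq 0)}(\CC^{2})]=1$ for every $\bvec$, so the base series is literally $\sum_{\bvec}\tvec^{\bvec}=\prod_{m}(1-t_{m})^{-1}$ and no centralizer structure theory is needed. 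You instead apply the power structure only over the $x$-line $\AA^{1}$, reducing to the stacks $\mathcal{N}_{\beta}$ with $A$ nilpotent, and then compute $[\mathcal{N}_{\beta}]$ directly from the Levi decomposition of the centralizer $Z(N)$. Your approach is closer in spirit to the original Feit--Fine counting argument and makes the appearance of the factors $[\End(b_{k})]/[\GL(b_{k})]$ completely transparent (they are the Levi pieces of $Z(N)$), at the cost of importing the structure theorem for nilpotent centralizers; the paper's approach trades that structure theory for one extra application of the power structure and the slick cancellation trick, which is what lets it generalize cleanly to the orbifold setting of Theorem~\ref{thm: formula for C (n1,...,nr)}. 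Your worry about the first step is exactly the content of the paper's Lemma~\ref{lem: geometric formula for A(t)^M holds when A is effective stack and M is a variety} (in its multivariable form), so that obstacle is already handled.
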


This theorem can be concisely expressed in terms of a generating
function. Let $M_{\CC } $ the the Grothendieck group of varieties
localized at the classes of the general linear groups, namely
\[
M_{\CC } = K_{0} (Var_{\CC })[\LL ^{-1}, (1-\LL ^{-b})^{-1}, b>0].
\]

We can regard $M_{\CC }$ as a subring of $K_{0} (Var_{\CC })[[\LL
^{-1}]]$ via\footnote{By $K_{0} (Var_{\CC })[[\LL ^{-1}]]$ we really
mean the completion of $K_{0} (Var_{\CC })[\LL ^{-1}]$ in the
dimension filtration (see for example
\cite[\S~2]{Behrend-Dhillon}). Laurent series in $\LL ^{-1}$ make
sense in this ring.} the geometric series expansion of $(1-\LL
^{-b})^{-1}$.

%\begin{cor}\label{cor: generating fnc formulas for commuting var}
\begin{primedtheorem}{thm: formula for C(alpha)}\label{thm: generating fnc formulas for commuting var}
The following equation holds in $M_{\CC }[[t_{1},t_{2},\dots ]]$:
\begin{equation}\label{eqn: gen fnc for C(alpha)}
\sum _{\alpha } \frac{[C (\alpha )]}{[\GL (|\alpha |)]}\, \prod
_{k=1}^{\infty } t_{k}^{b_{k} (\alpha )} = \prod _{k=1}^{\infty }\prod
_{m=1}^{\infty } \left(1-\LL ^{2-m}t_{k} \right) ^{-1}.
\end{equation}
In particular, by setting $t_{k}=t^{k}$, we get
\[
\sum _{n=0}^{\infty }\frac{[C (n)]}{[\GL (n)]}\, t^{n }=\prod _{k=1}^{\infty }\prod
_{m=1}^{\infty } \left(1-\LL ^{2-m}t^{k} \right) ^{-1}.
\]
\end{primedtheorem}
%\end{cor}
Theorems \ref{thm: formula for C(alpha)} and \ref{thm: generating fnc
formulas for commuting var} are equivalent: Equation~\eqref{eqn: gen
fnc for C(alpha)} is obtained from Equation~\eqref{eqn: formula for
C(alpha)} by multiplying by $\prod _{k}t_{k}^{b_{k} (\alpha )}/[\GL
(|\alpha |)]$, summing over partitions, reversing the order of the sum
and the product, and then applying Euler's formula
(Equation~\eqref{eqn: Euler's formula}).

Our other generalization concerns a version of the commuting variety
where the matrices are required to have a certain block form. Let 
\[
V=V_{1}\oplus \dots \oplus V_{r}
\]
where $V_{k}$ is a complex vector space of dimension $n_{k}$. Suppose
that $A,B\in \End (V)$ are of the form
\begin{align*}
A=A_{1}\oplus \dots \oplus A_{r}&&A_{k}&\in \Hom (V_{k},V_{k+1})\\
B=B_{1}\oplus \dots \oplus B_{r}&&B_{k}&\in \Hom (V_{k},V_{k})
\end{align*}
where we regard the indices as taking values in $\ZZ /r$. We say that
$A$ is of \emph{cyclic block type} and we say that $B$ is of
\emph{diagonal block type}. Matrices of this type determine a
representation of the quiver given in figure \ref{fig: diagram for C/Zr x C quiver}.

\begin{figure}[htbp]
\centering
\[ \xymatrix{ & & & \bullet \ar@(l,ur)[llld]_{A_r} \ar@(ur,ul)_{B_r}& & & \\ \bullet \ar[r]_{A_1} \ar@(dl,dr)_{B_1} & \bullet \ar[r]_{A_2} \ar@(dl,dr)_{B_2}& \bullet \ar[r]_{A_3} \ar@(dl,dr)_{B_3}& \bullet \ar@{--}[r] \ar@(dl,dr)_{B_4}& \bullet \ar[r]_{A_{r-3}} \ar@(dl,dr)_{B_{r-3}}& \bullet \ar[r]_{A_{r-2}} \ar@(dl,dr)_{B_{r-2}} & \bullet \ar@(lu,r)[lllu]_{A_{r-1}} \ar@(dl,dr)_{B_{r-1}} } \] 
\caption{Quiver associated to the matrices from Theorem~\ref{thm: formula for C (n1,...,nr)}.} \label{fig: diagram for C/Zr x C quiver}
\end{figure}
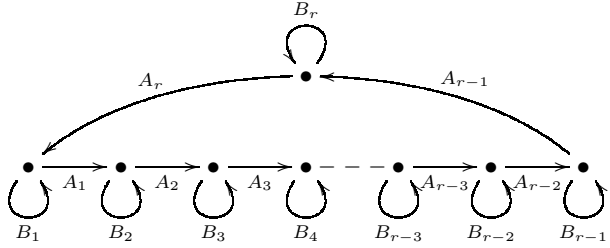

Let $\nvec= (n_{1},\dotsc ,n_{r})$ and consider the variety of
commuting pairs of endomorphisms in the above form:
\[
C (\nvec ) = \{(A,B)\in \End (V)^{2} \text{ in the above form and $[A,B]=0$} \}.
\]
\begin{theorem}\label{thm: formula for C (n1,...,nr)}
Let $C (\nvec )$ be as above and let $G (\nvec ) = \GL (V_{1})\times \dots \times \GL (V_{r})$. Then
\[
\sum _{\nvec } \frac{[C (\nvec )]}{[G(\nvec )]} \, \,t_{1}^{n_{1}}\cdots
t_{r}^{n_{r}} = \prod _{m=1}^{\infty } (1-\LL t^{m})^{-1}\,\,\prod
_{k=0}^{\infty }\,\prod _{(a,b)} \left(1-\LL
^{-k}\,t_{[a,b]}\,t^{m-1} \right)^{-1}
\]
where $t=t_{1}\cdots t_{r}$, $t_{[a,b]}=t_{a}t_{a+1}\dotsb t_{b}$, and
the last product runs over all $(a,b)\in (\ZZ /r)^{2}$.
\end{theorem}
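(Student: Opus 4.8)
The plan is to reinterpret $C(\nvec)$ as a space of sheaves on the orbifold $\Cr$ and to run the power‑structure argument behind Theorems~\ref{thm: formula for C(alpha)} and~\ref{thm: generating fnc formulas for commuting var}. A point of $C(\nvec)$ is exactly a representation of the quiver of Figure~\ref{fig: diagram for C/Zr x C quiver} subject to its relations, i.e.\ a $\ZZ/r$‑graded $\CC[A,B]$‑module of graded dimension $\nvec$ with $\deg A=1$ and $\deg B=0$; equivalently a $0$‑dimensional coherent sheaf on $\Cr=[\AA^{2}/\mu_{r}]$, where $\mu_{r}$ scales the first coordinate with weight one, together with a choice of basis on each $\mu_{r}$‑isotypic summand. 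Quotienting by $G(\nvec)$ (change of graded basis) identifies $[C(\nvec)]/[G(\nvec)]$ with the motivic class of the stack $\mathcal M_{\nvec}$ of such sheaves, so the theorem is equivalent to a closed form for $Z(\tvec):=\sum_{\nvec}[\mathcal M_{\nvec}]\,t_{1}^{n_{1}}\cdots t_{r}^{n_{r}}\in M_{\CC}[[t_{1},\dots,t_{r}]]$.

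The next step is a support decomposition. The orbifold $\Cr$ is the union of the stacky divisor along which the first coordinate vanishes --- coarse space $\CC$, carrying $\mu_{r}$‑gerbes --- and its free complement $U\cong\CC^{\ast}\times\CC$. A $0$‑dimensional sheaf splits canonically over its finite support, with local model depending only on the stratum, and each stratum is homogeneous under translation in the $\CC$‑directions. The power‑structure formalism then gives $Z(\tvec)=Z_{D}(\tvec)\cdot Z_{U}(\tvec)$, where $Z_{U}$ collects the sheaves supported on $U$ and $Z_{D}$ those supported on the stacky divisor. Since $U\cong\CC^{\ast}\times\CC$ is a genuine smooth surface, $Z_{U}$ is governed by the local Feit--Fine series and can be written down at once; $Z_{D}$ retains the orbifold structure and is the substantial factor.

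For $Z_{U}$: a sheaf supported at a point of $U$ is an ordinary $0$‑dimensional $\CC[x,y]$‑module supported at one point, whose preimage orbit in $\AA^{2}$ has $r$ points, so the induced equivariant sheaf has graded dimension $(\ell,\dots,\ell)$ and records only $t:=t_{1}\cdots t_{r}$. Its local generating function is the local Feit--Fine series $\prod_{k,m\ge 1}(1-\LL^{-m}t^{k})^{-1}$; this follows from Theorem~\ref{thm: generating fnc formulas for commuting var} together with the homogeneity of $\AA^{2}$, which give $\sum_{n}\tfrac{[C(n)]}{[\GL(n)]}t^{n}=(\text{local series})^{[\AA^{2}]}$ and hence pin down the local series. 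Raising it to the power $[U]=\LL^{2}-\LL$ and telescoping in $m$ collapses the product to $\prod_{k\ge 1}(1-\LL t^{k})^{-1}$, the first factor on the right‑hand side of the theorem.

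The factor $Z_{D}$ is the crux, and I expect the combinatorics here to be the main obstacle. A sheaf supported on the stacky divisor is a $\ZZ/r$‑graded $\CC[x,y]$‑module with $x$ nilpotent and $y$ unrestricted, and $Z_{D}(\tvec)$ is the graded generating function of these. Following the proof of Theorem~\ref{thm: formula for C(alpha)}, I would stratify by the isomorphism type $\lambda$ of the underlying graded $\CC[x]$‑module $M_{\lambda}$: by the classification of nilpotent representations of the cyclic quiver, $\lambda$ is a multiset of interval modules $M_{[a,b]}$ --- ``cyclic Jordan blocks'' of any length $\ge 1$, possibly wrapping around the $r$‑cycle --- with multiplicities $c_{[a,b]}(\lambda)$. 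Over the stratum of type $\lambda$ the commuting operators $y$ fill the affine space $E_{\lambda}=\End^{\mathrm{gr}}_{\CC[x]}(M_{\lambda})$ with stabilizer the unit group $E_{\lambda}^{\times}$, contributing $[E_{\lambda}]/[E_{\lambda}^{\times}]$. Since each $\End^{\mathrm{gr}}_{\CC[x]}(M_{[a,b]})$ is a local ring with residue field $\CC$, one gets $E_{\lambda}/\operatorname{rad}\cong\prod_{[a,b]}\End(c_{[a,b]}(\lambda))$ and hence $[E_{\lambda}]/[E_{\lambda}^{\times}]=\prod_{[a,b]}[\End(c_{[a,b]}(\lambda))]/[\GL(c_{[a,b]}(\lambda))]$. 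Summing over $\lambda$ and applying Euler's formula $\sum_{c\ge 0}\tfrac{[\End(c)]}{[\GL(c)]}s^{c}=\prod_{j\ge 0}(1-\LL^{-j}s)^{-1}$ to each interval factor resums this to $\prod_{[a,b]}\prod_{j\ge 0}(1-\LL^{-j}t^{\mathbf d_{[a,b]}})^{-1}$, the product now over all interval modules, with $t^{\mathbf d_{[a,b]}}$ the monomial recording the graded dimension of $M_{[a,b]}$. Indexing interval modules by a starting vertex $a\in\ZZ/r$ and a length $\ell\ge 1$, and writing $\ell=\ell_{0}+(m-1)r$ with $\ell_{0}\in\{1,\dots,r\}$, turns the index set into $(a,b)\in(\ZZ/r)^{2}$ together with $m\ge 1$, and $t^{\mathbf d_{[a,b]}}$ into $t_{[a,b]}\,t^{m-1}$; this is the second factor. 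The delicate points are matching the multiplicities, gradings, and powers of $\LL$ exactly, handling the wrap‑around intervals, and verifying convergence of the infinite products in $M_{\CC}[[t_{1},\dots,t_{r}]]$; as a sanity check, the $r=1$ case of the resulting identity is Theorem~\ref{thm: generating fnc formulas for commuting var}.
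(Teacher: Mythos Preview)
Your proposal is correct and lands on the same answer, but the route for the stacky factor $Z_{D}$ is genuinely different from the paper's.

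For $Z_{U}$ you invoke the local Feit--Fine series at a point of $\CC^{2}$ and raise it to $[\CC^{*}\times\CC]=\LL^{2}-\LL$, then telescope in the $\LL$-exponent to collapse everything to $\prod_{m\ge 1}(1-\LL t^{m})^{-1}$. The paper instead uses its ``numerator $=$ denominator'' trick directly on $\CC^{*}\times\CC$: it raises $\sum_{n}[\Coh_{n}^{(*\neq 0,0)}(\CC^{2})]t^{n}$ to the power $\LL$, and that base series is already the partition generating function $\sum_{n}p(n)t^{n}$ because pairs $(A\text{ nilpotent},B\text{ invertible})$ with $[A,B]=0$ have $B$ ranging exactly over the stabilizer of $A$. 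This is more self-contained (it does not call back to the full Feit--Fine formula), while yours is a pleasant telescoping.

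The substantive divergence is in $Z_{D}$. The paper applies the power structure a second time along the $y$-axis to reduce to $\Coh_{\nvec}^{(0,*\neq 0)}(\Cr)$, and then proves (its Lemma~\ref{lem: class of stacky Coh(0,*neq0) is given by the number of collections lambda(a,b)}) that this class is literally the integer counting collections $\{\lambda(a,b)\}$ of partitions with the right dimension vector, again by the $B\in G(\nvec)$ $\Rightarrow$ $B\in\operatorname{Stab}(J_{\{\lambda(a,b)\}})$ cancellation. Raising that integer-valued series to $\LL/(\LL-1)$ then gives the product. You bypass this entirely: you stratify directly by the isomorphism type $\lambda$ of the graded $\CC[x]$-module, use that $E_{\lambda}=\End^{\mathrm{gr}}_{\CC[x]}(M_{\lambda})$ has $E_{\lambda}/\operatorname{rad}\cong\prod_{I}\End(c_{I}(\lambda))$ (Krull--Schmidt for the cyclic quiver, plus locality of $\End(M_{[a,b]})$), deduce $[E_{\lambda}]/[E_{\lambda}^{\times}]=\prod_{I}[\End(c_{I})]/[\GL(c_{I})]$, and resum via Euler's identity~\eqref{eqn: Euler's formula}. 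This is a legitimate and arguably more structural computation: it trades the paper's second use of the power structure and its cancellation lemma for standard facts about endomorphism algebras of nilpotent cyclic-quiver representations. The paper's approach, in turn, avoids ever analyzing $E_{\lambda}$ and keeps the argument uniform with the $r=1$ case. One small point worth making explicit in your write-up is why $[E_{\lambda}]/[E_{\lambda}^{\times}]$ really equals the product over interval types: it follows because $E_{\lambda}\to E_{\lambda}/\operatorname{rad}$ is a linear surjection with unipotent kernel $1+\operatorname{rad}$, so both numerator and denominator pick up the same factor $\LL^{\dim\operatorname{rad}}$, and $E_{\lambda}^{\times}$ is special (an extension of $\prod\GL(c_{I})$ by a unipotent group), which justifies the division in $K_{0}(Stck_{\CC})$.
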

\begin{remark}\label{rem: our thms are used in motivic DT theory}
Theorems~\ref{thm: generating fnc formulas for commuting var} and
\ref{thm: formula for C (n1,...,nr)} have been used to compute motivic
Donaldson-Thomas invariants. The Feit-Fine formula was used in
\cite{Behrend-Bryan-Szendroi} to compute the motivic DT invariants of
$\CC ^{3}$ while Theorem~\ref{thm: formula for C (n1,...,nr)} was used
in conjunction with the dimensional reduction technique of
\cite{Morrison-dimensional-reduction} to compute the motivic DT
invariants of the orbifold $\CC ^{3}/ (\ZZ /r)$ (see
\cite[\S~8]{Morrison-dimensional-reduction}).
\end{remark}

\section{Power structures on the Grothendieck groups of varieties and
stacks}\label{sec: power structures on Groth gps}

The key technical tool we use are the power structures on the
Grothendieck groups of varieties and stacks. The notion of a power
structure is due to Gusein-Zade, Luengo, and Melle-Hernandez
\cite{GZ-L-MH-power}. It is closely related to pre-lambda ring
structures, although we will not use that language here. In this
section we describe the power structures on the Grothendieck groups
and we explain a geometric interpretation which is of key importance
to us.

\begin{defn}[\cite{GZ-L-MH-power}]\label{defn: power structure}
Let $R$ be a commutative ring with identity. A \emph{power structure}
is a map $(1+tR[[t]])\times R \to 1+tR[[t]]$, denoted $(A
(t),M)\mapsto A (t)^{M}$ taking a pair $(A (t),M)$ consisting of $A
(t)$, a formal power series in $t$ with coefficients in $R$ having
constant term 1, and $M\in R$ to a formal power series $A (t)^{M}$
satisfying the following expected properties of exponentiation:
\begin{enumerate}
\item $A (t)^{0} = 1$
\item $A (t)^{1} = A (t)$
\item $(A (t)\cdot B (t))^{M} = A (t)^{M}\cdot B (t)^{M}$
\item $A (t)^{M+N}=A (t)^{M}\cdot A (t)^{N}$
\item $A (t)^{M\cdot N}= (A (t)^{M})^{N}$
\item $(1+t)^{M} = 1+Mt+O (t^{2})$
\item $A (t^{k})^{M} = (A (t))^{M}|_{t\mapsto t^{k}}$.
\end{enumerate}
\end{defn}

The Grothendieck group of varieties $K_{0} (Var_{\CC })$ has a natural
power structure uniquely determined by the equation
\[
\left(\sum _{k=0}^{\infty }t^{k} \right)^{[X]} = \sum _{n=0}^{\infty } [\Sym ^{n}X] t^{n}
\]
where $X$ is a variety and $\Sym ^{n}X$ is the $n$th symmetric product
of $X$. This power structure is \emph{effective} the sense that it
respects the semi-ring $S_{0} (Var_{\CC })\subset K_{0} (Var_{\CC })$
of effective classes, i.e. those represented by the classes of
varieties. Moreover, on effective classes, $A (t)^{[M]}$ has an
important geometric interpretation due to Gusein-Zade, Luengo, and
Melle-Hernandez. 

Suppose that $M$ is a variety and 
\[
A (t)  = \sum _{k=0}^{\infty } [A_{k}]t^{k}
\]
where $A_{k}$ are varieties with $A_{0}$ a point. Then the coefficient
of $t^{n} $ in $A (t)^{[M]}$ is given by the class of the variety
\begin{equation}\label{eqn: geometric formula for nth coef of A(t)^M}
B_{n} = \bigsqcup _{\alpha \vdash n} \left(\prod _{i=1}^{\infty } M^{b_{i} (\alpha )}\setminus  \Delta  \right)\times _{S_{\alpha }} \left(\prod _{i=1}^{\infty } A_{i}^{b_{i} (\alpha )} \right)
\end{equation}
where $b_{i} (\alpha )$ is the number of parts of size $i $ in the
partition $\alpha $, $\Delta $ is the large diagonal in $\prod
_{i=1}^{\infty }M^{b_{i} (\alpha )} $, and $S_{\alpha }$ is the
product $\prod _{i=1}^{\infty }S_{b_{i} (\alpha )}$ of symmetric
groups acting on each factor in the obvious way. The variety $B_{n}$
has the following geometric interpretation; it parameterizes maps
\begin{equation}\label{eqn: interpretation of coefs of A(t)^M as charged particles on M}
\phi :S\to \bigsqcup _{i=1}^{\infty } A_{i}
\end{equation}
where $S\subset M$ is a finite subset and $\phi $ satisfies
\[
n= \sum _{x\in S} i (\phi (x))
\]
where $i:\sqcup_{i}A_{i}\to \NN $ is the tautological map taking the
value $i$ on the component $A_{i}$. We think of $B_{n}$ as
parameterizing a finite set of ``particles'' on $M$ of total
``charge'' $n$ where the ``internal state space'' of a charge $i$ particle
is $A_{i}$.

The power structure on $K_{0} (Var_{\CC })$ and the above geometric
interpretation have an extension to $K_{0} (Stck_{\CC })$, the
Grothendieck group of stacks having affine stabilizer groups:

It is shown in \cite{Ekedahl1} that $K_{0} (Stck_{\CC })$ is isomorphic
to $M_{\CC } = K_{0} (Var_{\CC })[\LL ^{-1},(1-\LL ^{n})^{-1}]$, the
Grothendieck group of varieties localized at the classes $\LL $ and
$(1-\LL ^{n})$ for $n\geq 1$ (this is equivalent to localizing at the
classes $[GL_{n}]$ for all $n$). Via the expansion of the classes
$(1-\LL ^{n})^{-1}$ as Laurent series in $\LL ^{-1}$, there is a map
\[
K_{0} (Stck_{\CC })\to K_{0} (Var_{\CC })[[\LL ^{-1}]]
\]
to the completion of $K_{0} (Var_{\CC })[\LL ^{-1}]$ in the dimension
filtration (which we denote somewhat abusively by $K_{0} (Var_{\CC
})[[\LL ^{-1}]]$).

The power structure on $K_{0} (Var_{\CC })$ has a unique extension to
$K_{0} (Stck_{\CC })$ and $K_{0} (Var_{\CC })[[\LL ^{-1}]]$
characterized by the property
\begin{equation}\label{eqn: (1-t)^(-LL^k) = (1-LL^k t)^(-1)}
(1-t)^{-\LL ^{k}[M]} = (1-\LL ^{k}t)^{-[M]}
\end{equation}
for all $k\in \ZZ $ (see \cite{Ekedahl2, GZ-L-MH-stacks}) and
varieties $M$. For $k\geq 0$, the above formula is equivalent to the
statement that $[\Sym ^{n} ( \CC ^{k}\times M)]=[\CC ^{nk}\times \Sym
^{n} (M)]$ in $K_{0} (Var_{\CC })$ which is a lemma due to Totaro
\cite[Lemma~4.4]{Gottsche-Motive}.

The power structure on $K_{0} (Stck_{\CC })$ no longer respects the
semi-ring $S_{0} (Stck_{\CC })$ of effective classes, i.e. those
spanned by stacks (see \cite[\S~3]{GZ-L-MH-stacks}). However, the
following partial effectivity result holds:
\begin{lemma}\label{lem: geometric formula for A(t)^M holds when A is effective stack and M is a variety}
Suppose that $M$ is a variety and $A_{i}$ are stacks where $A_{0}$ is
a point. Then the coefficients of $\left(\sum _{i=0}^{\infty
}[A_{i}]t^{i} \right)^{[M]}$ are the classes of stacks, in particular
they are given by equation \eqref{eqn: geometric formula for nth coef
of A(t)^M} and the geometric interpretation given by
equation~\eqref{eqn: interpretation of coefs of A(t)^M as charged
particles on M} continues to hold.
\end{lemma}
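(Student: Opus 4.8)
The plan is to reduce the claimed identity for stacks to the already-known geometric formula \eqref{eqn: geometric formula for nth coef of A(t)^M} for varieties, by exploiting the structure of $K_0(Stck_\CC)$. First I would recall that $K_0(Stck_\CC) \cong M_\CC$ is generated, as a group completed in the dimension filtration, by classes of the form $[N]\LL^{-k}$ for $N$ a variety and $k \geq 0$, together with the relations coming from \eqref{eqn: (1-t)^(-LL^k) = (1-LL^k t)^(-1)}. Since all the power-structure axioms in Definition~\ref{defn: power structure} are multiplicative in $M$, and since effectivity of \eqref{eqn: geometric formula for nth coef of A(t)^M} is already established when $A$ is a series of varieties and $M$ is a variety, the content of the lemma is precisely to handle the case where the coefficients $A_i$ are stacks rather than varieties, while keeping $M$ a variety.

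The key step is the following observation: for a variety $M$, the operation $A(t) \mapsto A(t)^{[M]}$ is a group homomorphism from the additive group of the coefficient ring into $1 + t\,(\text{coeff ring})[[t]]$ in a strong sense — property (3) says it is multiplicative in the series variable, and on the level of individual coefficients formula \eqref{eqn: geometric formula for nth coef of A(t)^M} expresses each coefficient of $A(t)^{[M]}$ as a universal polynomial (with $\NN$-coefficients, built from symmetric products and the large diagonal) in the classes $[A_i]$. I would argue that this universal polynomial expression, valid over $K_0(Var_\CC)$, extends verbatim to $K_0(Stck_\CC)$: indeed the extended power structure on $K_0(Stck_\CC)$ is \emph{unique} subject to \eqref{eqn: (1-t)^(-LL^k) = (1-LL^k t)^(-1)}, so it suffices to check that the assignment ``$n$th coefficient $= [B_n]$ with $B_n$ as in \eqref{eqn: geometric formula for nth coef of A(t)^M}, now interpreted as a quotient stack'' satisfies all seven axioms and reduces correctly on $(1-t)^{-\LL^k}$. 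The fibered products $\big(\prod_i M^{b_i(\alpha)} \setminus \Delta\big) \times_{S_\alpha} \big(\prod_i A_i^{b_i(\alpha)}\big)$ make sense as stacks when the $A_i$ are stacks (the $S_\alpha$-action is free on the first factor, so this is an honest quotient stack with affine stabilizers inherited from the $A_i$), so $B_n$ is a well-defined object of $Stck_\CC$.

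Concretely I would carry out the proof in three steps. (1) Define $B_n$ by formula \eqref{eqn: geometric formula for nth coef of A(t)^M} as a quotient stack and set $\widetilde{A(t)^{[M]}} := \sum_n [B_n] t^n$; note this is well-defined because the $S_\alpha$-quotient is geometric (free action on the diagonal-free configuration space). (2) Verify that $\widetilde{(\,\cdot\,)^{[M]}}$ satisfies axioms (1)–(7): axioms (1), (2), (6), (7) are immediate from the combinatorics of partitions; axiom (4) (additivity in the exponent) follows from the disjoint-union decomposition $M = M' \sqcup M''$ and regrouping particles; axiom (5) from iterating the particle interpretation; and axiom (3) (multiplicativity in the series) from the standard ``merging of particle configurations'' bijection underlying the geometric interpretation \eqref{eqn: interpretation of coefs of A(t)^M as charged particles on M}. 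These are exactly the verifications already implicit in \cite{GZ-L-MH-power} for varieties, and they go through because $M$ is still a variety — the coefficients being stacks never enters the combinatorial bookkeeping. (3) Check the normalization: compute $\widetilde{(1-t)^{-\LL^k[M]}}$ from the formula and confirm it equals $(1-\LL^k t)^{-[M]}$, using Totaro's lemma $[\Sym^n(\CC^k \times M)] = [\CC^{nk} \times \Sym^n M]$ as stated in the excerpt. By uniqueness of the extension, $\widetilde{A(t)^{[M]}} = A(t)^{[M]}$, which is the assertion of the lemma.

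The main obstacle I anticipate is step (2), specifically axiom (3), the multiplicativity $(A(t)B(t))^{[M]} = A(t)^{[M]} B(t)^{[M]}$: over $K_0(Var_\CC)$ this is a nontrivial geometric bijection (one must match a configuration of $A$-particles and a configuration of $B$-particles with a single configuration of $(A \times B)$-particles, handling the diagonal constraints carefully), and one has to be sure nothing in that argument secretly used that the internal state spaces were schemes rather than stacks. In fact it does not — the bijection is entirely on the level of the configuration space $\prod_i M^{b_i} \setminus \Delta$ of points of $M$, with the state spaces carried along passively — but making this precise at the level of stacks (e.g.\ that the relevant maps of quotient stacks are isomorphisms, not just bijections on points) is where the care is needed. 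An alternative, perhaps cleaner, route that sidesteps re-proving the axioms: observe that any stack class $[A_i]$ can be written as a $\ZZ[\LL^{\pm 1}, (1-\LL^n)^{-1}]$-linear combination of variety classes, expand formally using axioms (3) and (4) to reduce to the case $A_i = [N_i]\LL^{-k_i}$ with $N_i$ varieties, then use \eqref{eqn: (1-t)^(-LL^k) = (1-LL^k t)^(-1)} and the known variety case — but one then has to check this expansion is compatible with the geometric formula, which circles back to the same bookkeeping. I would present the direct verification as the main argument and remark on the reduction as an alternative.
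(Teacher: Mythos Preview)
Your main approach—verify axioms (1)--(7) for the geometric assignment $\widetilde{A(t)^{[M]}}$ and then invoke uniqueness of the extension—has a genuine gap. The uniqueness statement you are citing concerns power structures on \emph{all} of $K_{0}(Stck_{\CC})$: the map must be defined for arbitrary series in $1+tK_{0}(Stck_{\CC})[[t]]$ and arbitrary exponents in $K_{0}(Stck_{\CC})$. Your $\widetilde{(\cdot)^{[M]}}$ is only defined when the $A_{i}$ are effective stack classes and $M$ is a variety, so it is not a power structure in the sense of Definition~\ref{defn: power structure}, and the uniqueness theorem does not apply to it. To invoke uniqueness you would first have to extend $\widetilde{(\cdot)}$ to non-effective coefficients, which requires knowing that $[B_{n}]$ depends only on the classes $[A_{i}]$ rather than on the representing stacks; but that is essentially the statement you are trying to prove. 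Verifying the axioms on the effective locus does not by itself pin down the values there, because the proof of uniqueness in \cite{Ekedahl2, GZ-L-MH-stacks} proceeds by reducing a general exponent or coefficient to variety classes via the relations in $M_{\CC}$, and that reduction passes through non-effective classes.

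The paper's proof takes a different and much shorter route, closer in spirit to your alternative. The key device is the substitution rule $C(\LL^{N}t)^{[M]} = \bigl(C(t)^{[M]}\bigr)\big|_{t\mapsto \LL^{N}t}$, which follows formally from \eqref{eqn: (1-t)^(-LL^k) = (1-LL^k t)^(-1)} and the axioms. Fixing $d,D>0$ and working modulo $t^{d}$ and the dimension-filtration piece $F_{-D}$, one approximates $A(t)$ by a polynomial $\tilde A(t)$ and chooses $N$ large enough that every $\LL^{iN}\tilde A_{i}$ is the class of an actual variety. The variety case of the geometric formula then gives $\tilde A(\LL^{N}t)^{[M]}=\tilde B(\LL^{N}t)$, hence $A(\LL^{N}t)^{[M]}\equiv B(\LL^{N}t)\pmod{t^{d},F_{-D}}$; substituting back yields $A(t)^{[M]}\equiv B(t)\pmod{t^{d},F_{-D}}$, and since $d,D$ are arbitrary one concludes in $K_{0}(Var_{\CC})[[\LL^{-1}]][[t]]$. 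The rescaling $t\mapsto \LL^{N}t$ is exactly the trick that implements your ``write each $[A_{i}]$ in terms of variety classes'' idea without the bookkeeping you flagged: it clears denominators uniformly in $i$ and lets the dimension filtration absorb the error.
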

\begin{proof}
Let $B (t)=\sum _{n=0}^{\infty }[B_{n}]t^{n}$ where $B_{n}$ is the
stack defined by equation~\eqref{eqn: interpretation of coefs of
A(t)^M as charged particles on M}. We need to show that $A (t)^{[M]}=B
(t)$.

It is a formal consequence of the existence of the power structure and
equation~\eqref{eqn: (1-t)^(-LL^k) = (1-LL^k t)^(-1)} that
exponentiation commutes with the substitution $t\mapsto \LL ^{N}t$,
i.e.
\[
C (\LL ^{N}t)^{M} = \left(C (t)^{M} \right)|_{t\mapsto \LL ^{N}t}
\]
for any series $C (t)$ beginning with 1 (see \cite[Statement
2]{GZ-L-MH-power}).

We fix positive integers $d$ and $D$. Using the dimension filtration
$F_{-D}\subset K_{0} (Var_{\CC })[[\LL ^{-1}]]$ we may consider the
series $A (t)$ modulo $t^{d} $ and modulo elements of dimension $\leq
-D$. Since $A_{1},\dotsc ,A_{d-1}$ are the classes of stacks, there
exists a polynomial
\[
\tilde{A} (t) = \sum _{i=0}^{d-1} \tilde{A}_{i} t^{i}
\]
and an integer $N=N (d,D)$ such that 
\[
\tilde{A}(t) = A (t) \mod (t^{d}, F_{-D})
\]
and such that $\LL ^{iN}\tilde{A}_{i}$ is the class of a variety.
Since the desired formula holds for series whose coefficients are
varieties, it holds for the series $\tilde{A} (\LL ^{N}t)$ and hence
we have that
\[
A (\LL ^{N}t)^{[M]} = B (\LL ^{N}t) \mod (t^{d}, F_{-D})
\]
and thus by the substitution rule, we have
\[
A (t)^{[M]} = B (t) \mod (t^{d}, F_{-D}).
\]
Since the equality holds for arbitrary $d$ and $D$, it must hold in
$K_{0} (Var_\CC)[[\LL ^{-1}]][[t]]$.
\end{proof}

Power structures can be easily extended to accommodate power series in
several or even an infinite number of variables
\cite{GZ-L-MH-Integration,GZ-L-MH-Hilb}. Let $\tvec = (t_{1},\dotsc
,t_{r})$ denote an $r$-tuple of variables ($r$ could be countably
infinite) and let $R[[\tvec ]]$ be the ring of formal power series
$\sum _{\kvec } A_{\kvec } \tvec ^{\kvec }$ where the sum is over
$r$-tuples $\kvec = (k_{1},\dotsc ,k_{r})$ of non-negative
integers\footnote{If $r$ is infinite, then we require all but a finite
number of the $k_{i}$s to be zero.}. We adopt the multi-index product
convention throughout: $\tvec ^{\kvec }:=\prod _{j} t_{j}^{k_{j}}$.

In \cite{GZ-L-MH-Integration}, Gusein-Zade, Luengo, and
Melle-Hernandez, extend power structures to multi-variable power
series rings and they extend the geometric interpretation given by
equations \eqref{eqn: geometric formula for nth coef of A(t)^M} and
\eqref{eqn: interpretation of coefs of A(t)^M as charged particles on
M} to the multi-variable case. Namely, if $[A_{\kvec }]$ is the class
of a variety (stack) and $[M]$ is the class of a variety, then the
coefficient of $\tvec ^{\nvec }$ in the series $\left(\sum _{\kvec }
[A_{\kvec }] \tvec ^{\kvec } \right)^{[M]}$ is the variety (stack)
parameterizing maps
\[
\phi :S\to \bigsqcup _{\kvec \neq \zerovec } A_{\kvec }
\]
where $S\subset M$ is a finite subset and 
\[
\sum _{x\in S} \ivec (\phi (x)) = \nvec 
\]
where 
\[
\ivec :\sqcup A_{\ivec}\to \ZZ _{\geq 0}^{r}
\]
is the tautological index map. This again can be interpreted as
parameterizing ``particles'' on $M$ of total ``charge'' $\nvec $ where
``charge'' now is given by an $r$-tuple of integers and the ``internal
state space'' of a charge $\kvec $ particle is parameterized by
$A_{\kvec }$. The proof of Lemma~\ref{lem: geometric formula for
A(t)^M holds when A is effective stack and M is a variety} works in
this slightly more general setting.

We note that unpacking the definition of the multivariable power structure, we find that 
\[
(1-\tvec ^{\nvec })^{-\LL ^{k}} = (1-\LL ^{k} \tvec ^{\nvec })^{-1}
\]
for each monomial $\tvec ^{\nvec }$.

\section{The main argument}\label{sec: the main argument} In this
section, we first use the power structure on $K_{0} (Stck_{\CC })$ to
provide a short and easy proof of the Feit-Fine formula
(equation~\eqref{eqn: Feit and Fine formula for C (n)}). We then
refine the basic argument to prove our refinements and generalizations
of the Feit-Fine formula.

\subsection{Proof of the Feit-Fine formula}\label{subsec: proof of Feit-Fine}
We begin with the observation that the motivic class
\[
\frac{[C (n)]}{[GL (n)]} \in K_{0} (Var_{\CC })[\LL ^{-1},(1-\LL
^{b})^{-1}] \cong K_{0} (Stck_{\CC })
\]
has a geometric interpretation as the class of a natural
stack. Namely, let
\[
\Coh_{n} (\CC ^{2})
\]
denote the stack of coherent sheaves on the affine plane which are
supported at points and of length $n$. This is equivalent to the stack
of modules $M$ over the ring $\CC [x,y]$ such that $\dim _{\CC
}M=n$. Given a $\CC [x,y]$ module $M$ and a linear isomorphism $M\cong
\CC ^{n}$, the action of $x$ and $y$ on $M$ yields a pair $(A,B)$ of
commuting $n\times n$ matrices. Dividing out by the choice of the
isomorphism, this correspondence induces a stack equivalence
\begin{equation}\label{eqn: C(n)/GL(n)=Coh_n (C^2)}
C (n)/GL (n) \cong \Coh _{n}(\CC ^{2}).
\end{equation}
Since all $GL (n)$ torsors are trivial (essentially by definition
\cite{Ekedahl1}) in $K_{0} (Stck_{\CC })$, the above yields
\[
\sum _{n=0}^{\infty } \frac{[C (n)]}{[GL (n)]} \, t^{n} = \sum _{n=0}^{\infty } \left[\Coh _{n} (\CC ^{2}) \right]\, t^{n}
\]
in $K_{0} (Stck_{\CC })[[t]]$.

Let
\[
\Coh ^{(0,0)}_{n} (\CC ^{2})
\]
be the stack of length $n$ coherent sheaves supported at
$(0,0)\in \CC ^{2}$. Then the geometric interpretation of the power
structure on $K_{0} (Stck_{\CC })$ implies that
\begin{equation}\label{eqn: power formula 1}
\sum _{n=0}^{\infty } [\Coh _{n} (\CC ^{2})] \,t^{n} = \left(\sum
_{n=0}^{\infty }\left[\Coh _{n}^{(0,0)} (\CC ^{2}) \right] t^{n}
\right) ^{[\CC ^{2}]}.
\end{equation}
Indeed, since the stack $\Coh ^{(a,b)}_{n} (\CC ^{2})$ of length $n$
sheaves supported at $(a,b)\in \CC ^{2}$ is canonically isomorphic to
$\Coh ^{(0,0)}_{n} (\CC ^{2})$, we see that as required by
equation~\eqref{eqn: interpretation of coefs of A(t)^M as charged
particles on M}, $\Coh _{n} (\CC ^{2})$ parameterizes maps
\[
\phi :S \to \bigsqcup _{i=1}^{\infty } \Coh ^{(0,0)}_{i} (\CC ^{2})
\]
where $S\subset \CC ^{2}$ is the support set of the sheaf and $\phi $
identifies the sheaf restricted to each point of its support with the
corresponding sheaf supported at $(0,0)$.

Let 
\[
\Coh ^{(0,*\neq 0)}_{n} (\CC ^{2})
\]
be the stack of length $n$ coherent sheaves whose support lies on
points of the form $(0,b)\in \CC ^{2}$ with $b\neq 0$. Then by an
argument similar to above we have
\begin{equation}\label{eqn: power formula 2}
\sum _{n=0}^{\infty }\left[\Coh ^{(0,*\neq 0)}_{n} (\CC ^{2}) \right]
t^{n} = \left(\sum _{n=0}^{\infty }\left[\Coh ^{(0,0)}_{n} (\CC ^{2}) \right] t^{n} \right)^{[\CC -\{0 \}]}.
\end{equation}
Combining equations \eqref{eqn: power formula 1} and \eqref{eqn: power
formula 2} and applying the power structure axioms we get
\[
\sum _{n=0}^{\infty }\left[\Coh _{n} (\CC ^{2}) \right] t^{n} =
\left(\sum _{n=0}^{\infty } \left[\Coh _{n}^{(0,*\neq 0)} (\CC ^{2})
\right] t^{n}\right)^{\frac{\LL ^{2}}{\LL -1} } .
\]
Note that under the equivalence given by equation \eqref{eqn:
C(n)/GL(n)=Coh_n (C^2)}, a point in the substack $\Coh ^{(0,*\neq
0)}_{n} (\CC ^{2})\subset \Coh _{n} (\CC ^{2})$ corresponds to a pair
of matrices $(A,B)$ where the eigenvalues of $A$ are all 0 and the
eigenvalues of $B$ are all non-zero. In other words, $\Coh ^{(0,*\neq
0)}_{n} (\CC ^{2})$ is equivalent to the stack quotient:
\begin{equation*}
\{(A,B)\in \End (n)^{2}:[A,B]=0 , \text{$A$ is nilpotent, $B$ is
invertible}\}/GL (n) . 
\end{equation*}
For any partition $\lambda \vdash n$, let $J_{\lambda }$ denote the
unique nilpotent matrix in Jordan normal form having Jordan type
$\lambda $. Then using the action of $GL (n)$ to put $A$ in Jordan
normal form, we get an equivalence
\[
\Coh _{n}^{(0,*\neq 0)} (\CC ^{2})\cong \bigsqcup _{\lambda }
\left\{B\in GL (n):[J_{\lambda },B] =0
\right\}/\operatorname{Stab}_{J_{\lambda }} (GL (n)) 
\]
where $\operatorname{Stab}_{J_{\lambda }} (GL (n)) \subset GL (n)$ is
the stabilizer of $J_{\lambda }$ under the conjugation action.  Since
the equation $[J_{\lambda },B]=0$ is equivalent to the equation
$B^{-1}J_{\lambda }B=J_{\lambda }$, the numerator and the denominator
of the above stack quotient are the same. Thus in $K_{0} (Stck_{\CC
})$ we simply have
\[
\left[\Coh _{n}^{(0,*\neq 0)} (\CC ^{2}) \right] = p (n)
\]
where $p (n)$ is the number of partitions of $n$. 

Putting it all together, we get
\begin{align*}
\sum _{n=0}^{\infty }\frac{[C (n)]}{[GL (n)]} t^{n} &=\left(\sum _{n=0}^{\infty } \left[\Coh ^{(0,*\neq 0)}_{n} (\CC ^{2})  \right]t^{n} \right) ^{\frac{\LL ^{2}}{\LL -1}} \\
&=\left(\sum _{n=0}^{\infty }p (n)t^{n} \right) ^{\LL +1+\LL ^{-1}+\LL ^{-2}+\dotsb }\\
&= \prod _{k=1}^{\infty }\left(\prod _{m=1}^{\infty } (1-t^{m})^{-1} \right) ^{\LL ^{2-k}}\\
&= \prod _{k=1}^{\infty }\prod _{m=1}^{\infty } (1-\LL
^{2-k}t^{m})^{-1}.
\end{align*}
Here we are working over the completed ring $K_{0} (Var_{\CC })[[\LL
^{-1}]]$ and we have used equation \eqref{eqn: (1-t)^(-LL^k) = (1-LL^k
t)^(-1)}. The above equation is equivalent by expansion to the Feit-Fine
formula and thus completes our proof.

\subsection{Proof of Theorem~\ref{thm: formula for C(alpha)}/Theorem~
\ref{thm: generating fnc formulas for commuting var}}

This proof is a straightforward generalization of the previous proof
utilizing the multi-variable generalization of power structures.

Let $\bvec = (b_{1},b_{2},\dotsc )$ be a countable tuple of
non-negative integers with a finite number of non-zero entries. Let 
\[
\alpha  = (1^{b_{1}},2^{b_{2}},\dotsc )
\]
be the corresponding partition of $n=\sum _{k}k b_{k}$, i.e. $\alpha $
is the partition having $b_{k}$ parts of size $k$.

Let
\[
\Coh _{\bvec } (\CC ^{2})
\]
denote the stack of length $n$ coherent sheaves $F$ on $\CC ^{2}$ such
that multiplication by the $x$ coordinate on $H^{0} (\CC ^{2},F)$ has
Jordan type $\alpha $.

Via the same argument leading to equation~\eqref{eqn: C(n)/GL(n)=Coh_n
(C^2)}, we have the equivalence of stacks
\[
\frac{C (\alpha )}{GL (n)} \cong \Coh _{\bvec } (\CC ^{2}).
\]
Thus we get the equation
\[
\sum _{\bvec } \frac{[C (\alpha )]}{[GL (n)]} \tvec ^{\bvec } = \sum
_{\bvec }\left[\Coh _{\bvec } (\CC ^{2}) \right] \tvec ^{\bvec }
\]
in the ring $K_{0} (Stck_{\CC })[[\tvec ]]$. 

Let $\Coh _{\bvec }^{(0,0)} $ and $\Coh _{\bvec }^{(0,*\neq 0)} $ be
the substacks of $\Coh _{\bvec } (\CC ^{2})$ parameterizing sheaves
supported at the origin and at points of the form $(0,y) $ with $y\neq
0$ respectively. The geometric interpretation of the (multi-variable)
power structure yields
\[
\sum _{\bvec } \left[\Coh _{\bvec } (\CC ^{2}) \right] \tvec ^{\bvec }
=\left(\sum _{\bvec } \left[\Coh _{\bvec }^{(0,0)} (\CC ^{2}) \right]
\tvec ^{\bvec } \right)^{\LL ^{2}}
\]
since once again we may use translation to canonically identify $\Coh
_{\bvec }^{(x_{0},y_{0})} (\CC ^{2})$ with $\Coh _{\bvec }^{(0,0)}
(\CC ^{2})$ to see that $\Coh _{\nvec } (\CC ^{2})$ parameterizes
maps
\[
\phi :S\to  \bigsqcup _{\bvec } \Coh _{\bvec }^{(0,0)} (\CC ^{2})
\]
where $S\subset \CC ^{2}$ is the support of the sheaf and $\nvec =\sum
_{x\in S}\bvec (\phi (x))$.

Arguing as in \S~\ref{subsec: proof of Feit-Fine}, we arrive at the equation
\[
\sum _{\bvec }\left[\Coh _{\bvec } (\CC ^{2}) \right] \tvec ^{\bvec }
= \left( \sum _{\bvec }\left[\Coh ^{(0,*\neq 0)}_{\bvec } (\CC ^{2})
\right] \tvec ^{\bvec } \right) ^{\frac{\LL ^{2}}{\LL -1}}.
\]
We then argue as before to find that the motivic class of stack $\Coh
_{\bvec }^{(0,*\neq 0)} (\CC ^{2})$ is just 1 so that we get
\begin{align*}
\sum _{\bvec } \left[\Coh _{\bvec } (\CC ^{2}) \right] \tvec ^{\bvec } &= \left(\sum _{\bvec } \tvec ^{\bvec } \right)^{\LL +1+\LL ^{-1}+\LL ^{-2}+\dotsb }\\
&=\prod _{k=1}^{\infty }\prod _{m=1}^{\infty } (1-t_{m})^{-\LL ^{2-k}}\\
&=\prod _{k=1}^{\infty }\prod _{m=1}^{\infty } (1-\LL ^{2-k} t_{m})^{-1}.
\end{align*}

\subsection{Proof of Euler's formula} Applying the argument of
\S~\ref{subsec: proof of Feit-Fine} to the affine line instead of the
affine plane gives us a geometric proof of a more elementary formula
due to Euler. Let
\[
\Coh _{n} (\CC )
\]
be the stack of length $n$ coherent sheaves on the affine line. It is
equivalent to the stack of $\CC [x]$ modules of dimension $n$ and
hence
\[
\Coh _{n} (\CC ) \cong \End (n)/GL (n).
\]
Similarly, the stacks of coherent sheaves supported at the origin and
away from the origin respectively are given by nilpotent and
invertible matrices up to conjugation:
\begin{align*}
\Coh ^{0}_{n} (\CC )\cong Nil (n) /GL (n),&&\Coh ^{*\neq 0}_{n} (\CC
)\cong GL (n)/GL (n).
\end{align*}
The geometric interpretation of the power structure then implies
\begin{align*}
\sum _{n=0}^{\infty }\left[ \Coh_{n} (\CC )  \right] t^{n} &= \left(\sum _{n=0}^{\infty } \left[ \Coh ^{0}_{n} (\CC ) \right] t^{n} \right) ^{\LL }\\
\sum _{n=0}^{\infty }\left[ \Coh_{n}^{*\neq 0} (\CC )  \right] t^{n}& = \left(\sum _{n=0}^{\infty } \left[ \Coh ^{0}_{n} (\CC ) \right] t^{n} \right) ^{\LL -1}
\end{align*}
Consequently we have
\begin{align}\label{eqn: Euler's formula}
\nonumber
\sum _{n=0}^{\infty } \frac{[\End (n)]}{[GL (n)]} t^{n}&= \left(\sum _{n=0}^{\infty } \left[\Coh ^{*\neq 0}_{n} (\CC ) \right]t^{n} \right) ^{\frac{\LL }{\LL -1}}\\ \nonumber
&=\left(\sum _{n=0}^{\infty }t^{n} \right)^{1+\LL ^{-1}+\LL ^{-2}+\dotsb }\\ \nonumber
&=\prod _{k=0}^{\infty } (1-t)^{-\LL ^{-k}}\\ 
&=\prod _{k=0}^{\infty } (1-\LL ^{-k}t)^{-1}.
\end{align}
Since 
\[
\frac{[\End (n)]}{[GL (n)]} = \frac{\LL ^{n^{2}}}{(\LL ^{n}-\LL
^{n-1})\dotsb (\LL ^{n}-1)} = \frac{1}{(1-\LL ^{-1})\dotsb (1-\LL
^{-n})}
\]
we can set $q=\LL ^{-1}$ to get the more familiar version of Euler's
formula
\[
\sum _{n=0}^{\infty } \frac{t^{n}}{(1-q)\dotsb (1-q^{n})} =\prod
_{k=0}^{\infty } (1-q^{k}t)^{-1}.
\]

\subsection{Proof of Theorem~\ref{thm: formula for C (n1,...,nr)} }
This proof follows a similar script to our previous proofs. The key is
to interpret the variety $C (\nvec )$ in terms of sheaves on an
orbifold quotient of $\CC ^{2}$. Namely, we consider the orbifold
given by the following stack quotient
\[
\Cr :=\CC / (\ZZ /r) \times \CC 
\]
where $k\in \ZZ /r$ acts on $\CC $ by multiplication by $\exp\left(2\pi i k/r \right)$.

A coherent sheaf on $\Cr $ may be regarded as a coherent sheaf on $\CC
^{2}$, invariant under the action of $\ZZ /r$. If $F$ is a $\ZZ
/r$-invariant sheaf on $\CC ^{2}$, then $H^{0} (\CC ^{2},F) $ is
naturally a $\ZZ /r$ representation. For each $\nvec = (n_{1},\dotsc
,n_{r})$, let
\[
\Coh _{\nvec } (\Cr )
\]
be the stack of sheaves on $\Cr $ such that the dimension of the
weight $k$ space in the $\ZZ /r$ representation $H^{0} (\CC ^{2},F)$
is $n_{k}$.

We fix a $\ZZ /r$ representation $V=V_{1}\oplus \dotsb \oplus V_{r}$
such that $V_{k}$, the weight $k$ subrepresentation, has dimension
$n_{k}$. For any object $F\in \Coh _{\nvec } (\Cr )$, we fix an
isomorphism
\[
H^{0} (\CC ^{2},F)\cong V.
\]
Note that under this identification, multiplication by $x$ on $V$
(regarded as an $\CC [x,y]$-module) takes $V_{i}$ to $V_{i+1}$ and
multiplication by $y$ takes $V_{i}$ to $V_{i}$. Dividing out by the
choice of the isomorphism, the identification induces a stack
equivalence:
\begin{equation}\label{eqn: stack equivalence C(n1,..,nr)/G = Coh(C/ZrxC)}
C (\nvec )/G (\nvec )\cong \Coh _{\nvec } (\Cr )
\end{equation}
and hence leads to the equality
\begin{equation}\label{eqn: series for C(n1,..,nr) is the series for the stack Coh}
\sum _{\nvec }\frac{[C (\nvec )]}{[G (\nvec )]} \tvec ^{\nvec } = \sum
_{\nvec } [\Coh _{\nvec } (\Cr )] \tvec ^{\nvec }
\end{equation}
in the ring $K_{0} (Stck_{\CC})[[t_{1},\dotsc ,t_{r}]]$.

To analyze the series $\sum _{\nvec } [\Coh _{\nvec } (\Cr )] \tvec
^{\nvec }$ using power structures, we must consider the stacky points
and non-stacky points of $\Cr $ separately. We use
\[
\Coh ^{(0,*)}_{\nvec } (\Cr )\text{, and } \Coh ^{(*\neq 0,*)}_{\nvec
} (\Cr )
\]
respectively to denote the substack of sheaves supported on
respectively the stacky locus $B\ZZ /r \times \CC \subset \Cr $ and
its complement $ \CC ^{*}/ (\ZZ /r)\times \CC \subset \Cr $.

We note that $\Coh _{\nvec }^{(*\neq 0,*)} (\Cr )$ is empty unless
$n_{1}=\dotsb =n_{r}$ in which case we have an equivalence
\[
\Coh _{(n,\dotsc ,n)}^{(*\neq 0,*)} (\Cr ) \cong \Coh _{n}^{(*\neq
0,*)} (\CC ^{2})
\]
induced by the equivalence $\CC ^{*}/ (\ZZ /r)\times \CC \cong \CC
^{*}\times \CC $. Letting 
\[
t=t_{1}\dotsb t_{r}
\]
and using the power structure as in \S~\ref{subsec: proof of
Feit-Fine}, we get
\begin{align}\label{eqn: generating series for non-stacky part of C/ZrxC}
\nonumber\sum _{\nvec }[\Coh _{\nvec }^{(*\neq 0,*)} (\Cr )]\tvec ^{\nvec } 
&= \sum _{n=0}^{\infty } [\Coh _{n}^{(*\neq 0,*)} (\CC ^{2})]t^{n} \\
\nonumber&= \left(\sum _{n=0}^{\infty } [\Coh _{n}^{(*\neq 0,0)} (\CC ^{2})]t^{n}  \right)^{\LL }\\
\nonumber&= \left(\sum _{n=0}^{\infty }p (n) t^{n} \right)^{\LL }\\
\nonumber&=\left(\prod _{m=1}^{\infty } (1-t^{m})^{-1} \right)^{\LL }\\
&= \prod _{m=1}^{\infty } (1-\LL t^{m})^{-1}. 
\end{align}

Using the geometric interpretation of the multi-variable version of
the power structure, we may use arguments similar to the ones the
previous proofs to get
\begin{align}\label{eqn: formula for stacky Coh(0,*) in terms of Coh(0,*neq 0) using power struct}
\nonumber \sum _{\nvec }[\Coh _{\nvec }^{(0,*)} (\Cr )] \tvec ^{\nvec } &= \left( \sum _{\nvec } [\Coh _{\nvec }^{(0,0)} (\Cr )] \tvec ^{\nvec } \right)^{\LL }\\
&=\left(\sum _{n} [\Coh _{\nvec }^{(0,*\neq 0)} (\Cr )] \tvec ^{\nvec } \right)^{\frac{\LL }{\LL -1}}. 
\end{align}

The right hand side in the above equation will be determined using the
following lemma.

\begin{lemma}[c.f. Lemma 4.8 of \cite{Morrison-Nagao}]\label{lem: class of stacky Coh(0,*neq0) is given by the number of collections lambda(a,b)}
The class $[\Coh _{\nvec }^{(0,*\neq 0)} (\Cr )]\in K_{0} (Stck_{\CC
})$ is the positive integer given by the number of collections
$\{\lambda (a,b) \}$ consisting of partitions indexed by $(a,b)\in
(\ZZ /r)^{2}$ satisfying
\begin{equation}\label{eqn: dimension vector associated to the collection lambda(a,b)}
n_{j} = \sum _{(a,b)} \left\{|\lambda (a,b)| - \sum
_{[a,b]\not \owns j}l (\lambda (a,b)) \right\}
\end{equation}
where $l (\lambda (a,b))$ denotes the length of the partition $\lambda
(a,b)$ and $[a,b]\subset \ZZ /r$ is the ``interval'' $a,a+1,\dotsc
,b$.
\end{lemma}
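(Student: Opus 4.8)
The plan is to re-run, now with the $\ZZ/r$-grading kept track of, the ``numerator equals denominator'' computation that produced $[\Coh_n^{(0,*\neq 0)}(\CC^2)]=p(n)$ in \S~\ref{subsec: proof of Feit-Fine}. Via the stack equivalence \eqref{eqn: stack equivalence C(n1,..,nr)/G = Coh(C/ZrxC)}, a point of $\Coh_{\nvec}^{(0,*\neq 0)}(\Cr)$ is a pair $(A,B)$ of endomorphisms of $V=V_1\oplus\dots\oplus V_r$ with $A(V_k)\subseteq V_{k+1}$, $B(V_k)\subseteq V_k$ and $[A,B]=0$, where the support conditions force $A$ to be nilpotent (the support lies over $x=0$) and $B$ to be invertible (the support lies over $y\neq 0$); these are taken up to the conjugation action of $G(\nvec)=\GL(V_1)\times\dots\times\GL(V_r)$. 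The elementary point to bear in mind is that ``$B$ invertible with $B(V_k)\subseteq V_k$'' means exactly $B\in G(\nvec)$, while ``$[A,B]=0$'' means exactly that $B$ lies in the centralizer of $A$ inside $G(\nvec)$, i.e.\ $B\in\operatorname{Stab}_{G(\nvec)}(A)$ for the conjugation action.

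The first real step is to classify nilpotent $A$ with $A(V_k)\subseteq V_{k+1}$ up to $G(\nvec)$-conjugacy. Such an $A$ is nothing but a nilpotent representation of the cyclic quiver on $r$ vertices obtained by deleting the loops $B_k$ from Figure~\ref{fig: diagram for C/Zr x C quiver}, so it is $G(\nvec)$-conjugate to a direct sum, unique up to reordering, of indecomposables; each indecomposable is a ``graded Jordan block'' determined by its starting vertex $a\in\ZZ/r$ and its length $\ell\geq 1$, and supported on the consecutive vertices $a,a+1,\dots,a+\ell-1$. Writing $b=a+\ell-1\in\ZZ/r$ for its terminal vertex, one has $\ell=c+rw$ where $c\in\{1,\dots,r\}$ represents $b-a+1\bmod r$ and $w\geq 0$ is the number of times the block wraps around the cycle. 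For each $(a,b)\in(\ZZ/r)^2$, recording the integers $w+1$ attached to the blocks of type $(a,b)$ as the parts of a partition $\lambda(a,b)$ sets up a bijection between $G(\nvec)$-conjugacy classes of such nilpotent $A$ with $\dim V_j=n_j$ for all $j$ and collections $\{\lambda(a,b)\}$ of partitions obeying \eqref{eqn: dimension vector associated to the collection lambda(a,b)}: a single block of type $(a,b)$ with $w$ wraps contributes $w$ to $\dim V_j$ when $j\notin[a,b]$ and $w+1$ when $j\in[a,b]$, and summing over all blocks turns $\dim V_j=n_j$ into precisely \eqref{eqn: dimension vector associated to the collection lambda(a,b)}. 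This is the combinatorial content of Lemma~4.8 of \cite{Morrison-Nagao}.

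Finally, for each collection $\{\lambda(a,b)\}$ solving \eqref{eqn: dimension vector associated to the collection lambda(a,b)}, fix the corresponding normal-form endomorphism $J$. Exactly as in \S~\ref{subsec: proof of Feit-Fine}, the locus of $\Coh_{\nvec}^{(0,*\neq 0)}(\Cr)$ on which $A$ is $G(\nvec)$-conjugate to $J$ is equivalent to the stack quotient
\[
\left\{\, B\in G(\nvec) : [J,B]=0 \,\right\}/\operatorname{Stab}_{G(\nvec)}(J),
\]
with $G(\nvec)$ acting by conjugation. By the observation of the first paragraph the numerator of this quotient is exactly $\operatorname{Stab}_{G(\nvec)}(J)$ itself, so the quotient has class $[\operatorname{Stab}_{G(\nvec)}(J)]/[\operatorname{Stab}_{G(\nvec)}(J)]=1$ in $K_0(Stck_\CC)$. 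As \eqref{eqn: dimension vector associated to the collection lambda(a,b)} has only finitely many solutions, summing over them shows that $[\Coh_{\nvec}^{(0,*\neq 0)}(\Cr)]$ equals the number of such collections, which is the assertion. The one place where genuine care is needed is the middle step: identifying the terminal vertex $b$ and the wrap-count $w$ of a graded Jordan block, and checking that the dimension vector of $J$ is given by \eqref{eqn: dimension vector associated to the collection lambda(a,b)}, is a bookkeeping computation in $\ZZ/r$ that must be run carefully through the degenerate cases where the interval $[a,b]$ is a single vertex or all of $\ZZ/r$; the remaining ingredients (the stack equivalence, the classification of nilpotent representations of the cyclic quiver, and the centralizer cancellation) are a direct transcription of the Feit-Fine argument.
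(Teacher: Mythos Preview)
Your proposal is correct and follows essentially the same approach as the paper's proof: both identify $\Coh_{\nvec}^{(0,*\neq 0)}(\Cr)$ with pairs $(A,B)$ where $A$ is nilpotent of cyclic block type and $B\in G(\nvec)$ commutes with $A$, classify the $G(\nvec)$-orbits of such $A$ by collections $\{\lambda(a,b)\}$ via a graded Jordan normal form, verify the dimension identity \eqref{eqn: dimension vector associated to the collection lambda(a,b)}, and then use the ``numerator equals stabilizer'' cancellation to see that each orbit contributes $1$. The only cosmetic difference is that you invoke the classification of nilpotent representations of the cyclic quiver and encode the Jordan block length as $c+rw$ with part size $w+1$, whereas the paper constructs an explicit Jordan basis and defines the part size as $\lfloor k/r\rfloor+1$; these agree on the nose.
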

\begin{proof}
Recall that an endomorphism $A$ of $V_{1}\oplus \dotsb \oplus V_{r}$
is of \emph{cyclic block type} if $A=A_{1}\oplus \dotsb \oplus A_{r}$
where $A_{i}\in \Hom (V_{i},V_{i+1})$ and $B$ is of \emph{diagonal
block type} if $B=B_{1}\oplus \dotsb \oplus B_{r}$ where $B_{i}\in \Hom
(V_{i},V_{i})$. Note that $B $ is invertible and of diagonal block
type if and only if $B\in G (\nvec)$.

The stack equivalence \eqref{eqn: stack equivalence C(n1,..,nr)/G =
Coh(C/ZrxC)} then induces an equivalence between $\Coh _{\nvec
}^{(0,*\neq 0)} (\Cr )$ and the stack quotient
\vspace{.1in}
\[
\left\{A,B\in \End (V): [A,B]=0\text{, $A$ is nilpotent of
cyclic block type, }B\in G (\nvec ) \right\}/G (\nvec ).
\] 
We now analyze Jordan normal form for nilpotent endomorphisms of
cyclic block type.

We suppose that $A$ is of cyclic block type and is nilpotent.  Jordan
canonical form says there exists vectors $e_{1},\dotsc ,e_{l}\in V$
such that the collection of non-zero vectors of the form $A^{j}e_{i}$
is a basis of $V$. Moreover, without loss of generality, we may assume
that each $e_{i}$ (and hence each $A^{j}e_{i}$) lies in a single
summand of $V=V_{1}\oplus \dotsb \oplus V_{r}$. We will say that a
vector $f\in \{e_{1},\dotsc ,e_{l} \}$ \emph{starts at $a$} and
\emph{ends at $b$} if $f\in V_{a}$ and $A^{k}f\in V_{b}$ where $k$ is
the largest integer such that $A^{k}f\neq 0$. We define the length of
$f$ to be $\left\lfloor \frac{k}{r} \right\rfloor +1$.  We define a
collection of partitions $\{\lambda (a,b) \}$ by declaring that the
number of parts of size $j$ in the partition $\lambda (a,b)$ is the
number of vectors in the collection $\{e_{1},\dotsc ,e_{l} \}$ which
start at $a$, end at $b$, and have length $j$.

Note that the dimensions of $V_{j}$ can be written in terms the
partitions $\lambda (a,b)$ and they are given precisely by
equation~\eqref{eqn: dimension vector associated to the collection
lambda(a,b)}.

The collection $\{\lambda (a,b) \}$ uniquely determines a nilpotent
endomorphism of cyclic block type up to conjugation by elements in $G
(\nvec )$.

For each collection partitions $\{\lambda (a,b) \}$, let $J_{\{\lambda
(a,b) \}}$ be the matrix in the Jordan form described above. Then we
have
\begin{multline*}
\left\{A,B\in \End (V): [A,B]=0\text{, $A$ is nilpotent of cyclic
block type, }B\in G (\nvec ) \right\}/G (\nvec ) \\
\cong \bigsqcup _{\{\lambda (a,b) \}} \left\{B\in G (\nvec ):
[J_{\{\lambda (a,b) \}},B]=0
\right\}/\operatorname{Stab}_{J_{\{\lambda (a,b) \}}} (G (\nvec ))
\end{multline*}
where the union is taken over all collections $\{\lambda (a,b) \}$
satisfying equation~\eqref{eqn: dimension vector associated to the
collection lambda(a,b)} and where
\[
\operatorname{Stab}_{J_{\{\lambda (a,b)
\}}} (G (\nvec ))\subset G (\nvec )
\]
is the stabilizer of $J_{\{\lambda (a,b) \}}$ under the action of
conjugation by elements of $G (\nvec )$.

Since the equation $[J_{\{\lambda (a,b) \}},B]=0$ is equivalent to the
equation $BJ_{\{\lambda (a,b) \}}B^{-1}=J_{\{\lambda (a,b) \}}$, the
numerator and the denominator of the above stack quotient are the
same. Therefore, in the Grothendieck group, each factor in the
disjoint union contributes 1. The lemma follows.
\end{proof}

We can now use the lemma to compute. In the below, $\{\lambda (a,b)
\}$ always denotes a collection of partitions indexed by $(a,b)\in
(\ZZ /r)^{2}$ and we use the notation $]a,b[$ to denote the complement
of the ``interval'' $[a,b]=\{a,a+1,\dotsc ,b \}\subset \ZZ /r$ (so for
example, $]a,b[$ is empty if $b=a-1$). For a subset $S\subset \ZZ /r$,
let $t_{S} $ denote the product $\prod _{s\in S}t_{s}$. Applying the
lemma we get
\begin{align}
\nonumber\sum _{\nvec } [\Coh _{\nvec }^{(0,*\neq 0)} (\Cr )] \tvec ^{\nvec } &=
\sum _{\{\lambda (a,b) \}} (t_{1}\dotsb t_{r})^{|\lambda (a,b)|} \prod _{j\not \in [a,b]}t_{j}^{-l (\lambda (a,b))}\\
\nonumber&=\prod _{(a,b)} \left(\sum _{\lambda (a,b)} t^{|\lambda  (a,b)|}\cdot t_{]a,b[}^{-l (\lambda (a,b))} \right) \\
\nonumber&=\prod _{(a,b)} \prod _{m=1}^{\infty }\left(1-t_{]a,b[}^{-1}t^{m} \right)^{-1}\\
\nonumber&=\prod _{(a,b)} \prod _{m=1}^{\infty } \left(1-t_{[a,b]}t^{m-1} \right)^{-1}.
\end{align}
The equality from the second to the third line in the above follows
from the well-known formula
\[
\sum _{\lambda }u^{|\lambda |}v^{l (\lambda )} = \prod _{m=1}^{\infty
} (1-vu^{m})^{-1}.
\]
Combining the above with equation~\eqref{eqn: formula for stacky
Coh(0,*) in terms of Coh(0,*neq 0) using power struct} we get
\begin{align}\label{eqn: generating series for stacky part of C/ZrxC}
\nonumber \sum _{\nvec } [\Coh ^{(0,*)}_{\nvec } (\Cr )] \tvec ^{\nvec
}
&=\left(\prod _{(a,b)}\prod _{m=1}^{\infty } (1-t_{[a,b]}t^{m-1})^{-1} \right)^{\frac{\LL }{\LL -1}}\\
\nonumber &=\prod _{k=0}^{\infty }\prod _{(a,b)}\prod _{m=1}^{\infty } (1-t_{[a,b]}t^{m-1})^{-\LL ^{-k}}\\
&=\prod _{m=1}^{\infty }\prod _{k=0}^{\infty }\prod _{(a,b)} (1-\LL
^{-k}t_{[a,b]}t^{m-1})^{-1}.
\end{align}
Finally, a coherent sheaf on $\Cr $ is the direct sum of a sheaf
supported on $\CC ^{*}/ (\ZZ /r)\times \CC $ with a sheaf supported on
$B\ZZ /r\times \CC $, and the numerical invariants $\nvec $ are
additive. Consequently we get the equation
\[
\sum _{\nvec} [\Coh _{\nvec } (\Cr )] \tvec ^{\nvec } = \left(\sum _{\nvec} [\Coh^{(*\neq 0,*)} _{\nvec } (\Cr )] \tvec ^{\nvec }\right)\left(\sum _{\nvec} [\Coh^{(0,*)} _{\nvec } (\Cr )] \tvec ^{\nvec } \right)
\]
Substituting equations \eqref{eqn: series for C(n1,..,nr) is the
series for the stack Coh}, \eqref{eqn: generating series for
non-stacky part of C/ZrxC}, and \eqref{eqn: generating series for
stacky part of C/ZrxC} into the above equation yields
\[
\sum _{\nvec }\frac{[C (\nvec )]}{[G (\nvec )]} \tvec ^{\nvec } = 
\left(\prod _{m=1}^{\infty } (1-\LL t^{m})^{-1} \right)
\left(\prod _{m=1}^{\infty }\prod _{k=0}^{\infty }\prod _{(a,b)} (1-\LL
^{-k}t_{[a,b]}t^{m-1})^{-1} \right)
\]
which is easily rewritten as the equation in Theorem~\ref{thm: formula
for C (n1,...,nr)} and thus completes its proof.

\bibliography{mainbiblio}
\bibliographystyle{plain}

\end{document}